\documentclass[12pt, a4paper]{amsart}

\usepackage[english]{babel}
\usepackage{amsmath}
\usepackage{amssymb}
\usepackage{bm}
\usepackage{amscd} 
\usepackage{microtype} 
\usepackage{verbatim} 
\usepackage{color}  
\usepackage{tikz-cd}\usetikzlibrary{babel} 
\usepackage{enumitem}
\usepackage{mathtools} 
\usepackage{cite}
\usepackage{hyperref} 
\usepackage[initials,msc-links,backrefs]{amsrefs}
\usepackage[all]{xy}

\usepackage[OT2, T1]{fontenc}

\title[Gassmann triples with special cycle types]{Gassmann triples with special cycle types and applications}

 \author[H. Kammeyer]{Holger Kammeyer}
 \author[S. Kionke]{Steffen Kionke}
 
 \address{Mathematical Institute, Heinrich Heine University D{\"u}sseldorf, Germany}
 \email{holger.kammeyer@hhu.de}

 \address{Faculty of Mathematics and Computer Science, FernUniversit\"at in Hagen, Germany}
 \email{steffen.kionke@fernuni-hagen.de}
 

 \makeatletter
\@namedef{subjclassname@2020}{%
  \textup{2020} Mathematics Subject Classification}
\makeatother

\subjclass[2020]{20B10, (11R42, 58C40)}
\keywords{Gassmann triple, arithmetical equivalence, isospectral}

\theoremstyle{plain}
\newtheorem{theorem}{Theorem}
\newtheorem{lemma}[theorem]{Lemma}
\newtheorem{corollary}[theorem]{Corollary}
\newtheorem{proposition}[theorem]{Proposition}

\theoremstyle{definition}
\newtheorem{definition}[theorem]{Definition}
\newtheorem*{definition*}{Definition}

\newtheorem*{observation*}{Observation}


\providecommand{\ignore}[1]{}

\providecommand{\Q}{\mathbb{Q}}

\providecommand{\C}{\mathbb{C}}

\DeclareMathOperator{\Fix}{Fix}
\DeclareMathOperator{\supp}{supp}

\newcommand*{\arXiv}[1]{ \href{http://www.arxiv.org/abs/#1}{arXiv:\textbf{#1}}}

\hyphenation{com-men-su-ra-bi-li-ty}

\begin{document}

\begin{abstract}
We show that if one of various cycle types occurs in the permutation action of a finite group on the cosets of a given subgroup, then every almost conjugate subgroup is conjugate.  As a number theoretic application, corresponding decomposition types of primes effect that a number field is determined by the Dedekind zeta function.  As a geometric application, coverings of Riemannian manifolds with certain geodesic lifting behaviors must be isometric.
\end{abstract}

\maketitle

\section{Introduction}

Let \(G\) be a finite group.  Two subgroups \(H_1, H_2 \le G\) are called \emph{almost conjugate} if they intersect each conjugacy classe of \(G\) in the same number of elements.  In that case \((G; H_1, H_2)\) is called a \emph{Gassmann triple} and we say that a Gassmann triple is \emph{trivial} if \(H_1\) and \(H_2\) are honestly conjugate.  Gassmann triples naturally occur in number theory.  Let \(k_1\) and \(k_2\) be number fields and let \(K/\Q\) be a finite Galois extension containing both \(k_1\) and \(k_2\).  If \(H_1\) and \(H_2\) are the corresponding subgroups of \(G = \operatorname{Gal}(K/\Q)\), then \(\zeta_{k_1} = \zeta_{k_2}\) for the Dedekind zeta functions if and only if \((G; H_1, H_2)\) is a Gassmann triple.  In that case, \(k_1\) and \(k_2\) are called \emph{arithmetically equivalent}.  Of course \(k_1 \cong k_2\) if and only if \((G; H_1, H_2)\) is trivial.  For background and examples of Gassmann triples, we refer to~\cite{Klingen:similarities}.  So finding criteria under which a number field is determined by the zeta function translates to the question which conditions ensure that a Gassmann triple is trivial.  We offer the following result.

\begin{theorem} \label{thm:gassmann-triples}
  Let \(G\) be a finite group and let \(H_1 \le G\) be a subgroup of index \(n\).  Suppose that some element of \(G\) acts on \(G/H_1\) with cycle type
  \begin{enumerate}[label=(\roman*)]
  \item \label{item:twos} \((1,2,2, \ldots, 2)\), or
  \item \label{item:not-divide} \((a_1,\ldots, a_r, \ell)\) where \(\ell > 1\) is coprime to \(n a_1 \cdots a_r\), or
    \item \label{item:prime} \((a_1,\ldots, a_r, \ell)\) for a prime \(\ell\) that does not divide \(a_1 \cdots a_r\) and \(\ell\) is different from \(11\) and from \(\frac{q^k-1}{q-1}\) for all prime powers \(q\) and all \(k \geq 3\).
  \end{enumerate}
 Then every Gassmann triple \((G; H_1, H_2)\) is trivial. 
\end{theorem}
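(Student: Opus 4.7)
The plan begins by recasting almost conjugacy in terms of induced characters. A triple $(G;H_1,H_2)$ is Gassmann if and only if $\operatorname{Ind}_{H_1}^G \mathbf{1} = \operatorname{Ind}_{H_2}^G \mathbf{1}$, while triviality is the stronger requirement that $G/H_1 \cong G/H_2$ as $G$-sets. Since $\chi_{H_i}(g^k) = |\operatorname{Fix}_{G/H_i}(g^k)|$ and the cycle structure of any $g$ on a $G$-set is recovered from the fixed-point counts of its powers, every $g \in G$ has identical cycle types on $G/H_1$ and on $G/H_2$. It therefore suffices to exhibit, in each case, a $G$-equivariant bijection between $G/H_1$ and $G/H_2$.

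The first reduction, applicable to (ii) and (iii), is to replace $g$ by a suitable power. Setting $m = \operatorname{lcm}(a_1,\ldots,a_r)$, the coprimality of $\ell$ with each $a_i$---which holds in both (ii) and (iii)---implies that $g^m$ fixes every point of the $a_i$-cycles and still acts as an $\ell$-cycle on the remaining $\ell$ points. Hence we may assume that $g$ has cycle type $(1^{n-\ell},\ell)$ on both $G/H_1$ and $G/H_2$, and, as a permutation of either set, order $\ell$.

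The heart of the argument is then case-specific. For part~(i), the involution $g$ has a unique fixed coset in each $G/H_i$; after conjugating $H_i$ we may place this coset at $H_i$ itself, giving $g \in H_1 \cap H_2$. The single-fixed-point condition forces the $G$-conjugacy class of $g$ to meet $H_i$ in a single $H_i$-conjugacy class, and combined with the identity $|C_{H_1}(g)| = |C_{H_2}(g)|$ coming from almost conjugacy, a double-coset count should produce a $G$-equivariant bijection $G/H_1 \to G/H_2$. For part~(ii), the extra coprimality $\gcd(\ell, n) = 1$ produces strong divisibility constraints on the $G$-orbit structure of the distinguished $\ell$-subset, which should permit an elementary, classification-free argument. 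For part~(iii), the excluded values $\ell = 11$ and $\ell = (q^k-1)/(q-1)$, $k \geq 3$, are precisely the prime degrees at which non-affine, non-alternating 2-transitive actions occur (the Mathieu group $M_{11}$, respectively the projective groups $\operatorname{PSL}_k(q)$ and their extensions). Excluding them, the classification of finite 2-transitive groups should force the primitive constituent of the action on the $\ell$-orbit to be sufficiently rigid---essentially symmetric/alternating or affine---that equality of characters upgrades to isomorphism of $G$-sets.

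The principal obstacle will be part~(iii), which appears to genuinely require the classification of finite 2-transitive groups (and hence CFSG) as a black box; the excluded primes in the hypothesis are themselves a shadow of that classification. Parts~(i) and~(ii) should by contrast yield to more self-contained arguments: (i) via a rigidity analysis of involutions with a single fixed point together with a centralizer comparison, and (ii) via the divisibility constraint $\gcd(\ell, n) = 1$, which precludes exotic block decompositions of the $G$-action.
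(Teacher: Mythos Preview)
Your outline correctly sets up the character-theoretic reformulation, the power reduction for (ii) and (iii), and the role of the classification in (iii); but what you have written is a plan rather than a proof, and in case~(i) the plan has a genuine gap. You correctly observe that the unique-fixed-point condition forces $g^G \cap H_i$ to be a single $H_i$-class, hence $|C_{H_1}(g)| = |C_{H_2}(g)|$. However, this numerical equality does not by itself yield a $G$-equivariant bijection $G/H_1 \to G/H_2$, and ``a double-coset count'' is not a substitute for an argument. The paper's proof of (i) hinges on a different and non-obvious fact: two point involutions $\sigma,\tau$ on $\Omega$ have the \emph{same} fixed point if and only if $\sigma\tau$ has an \emph{odd} number of fixed points. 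Since the parity of $|\Fix(\sigma\tau,\Omega)|$ is determined by the common permutation character, one can define $f\colon G/H_1 \to G/H_2$ by sending the fixed point of each point involution on $G/H_1$ to its fixed point on $G/H_2$, and the parity criterion is precisely what makes $f$ well-defined and equivariant. Your centralizer comparison does not supply this.

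For (ii) and (iii) your instincts are right---(ii) is classification-free, (iii) genuinely needs CFSG via Feit's theorem---but the actual mechanism is absent from your sketch. The paper organises everything around the observation that a group generated by cycles admits a \emph{fixed-point detector}: an element $g$ with $\Fix(g,\Omega)=\Fix(G,\Omega)$, so that $|\Fix(S,\Omega)|$ for any set $S$ of cycles is computable from $\chi_\Omega$ alone. If some such $\Fix(S,\Omega)$ is a singleton, $\Omega$ is Gassmann solitary immediately; if it is a block $B$, an induction principle (also proved in the paper) reduces to the action of $G_{\{B\}}$ on $B$. Case~(ii) then follows by showing that the normal subgroup generated by $\ell$-cycles acts primitively on each orbit (from $\gcd(\ell,n)=1$) and using Rudio's lemma to cut $\Fix(S,\Omega)$ down to a point. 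Case~(iii) proceeds the same way until the orbit size equals $\ell$, where one splits into the $2$-transitive case (Feit, with exactly the exclusions you named) and the affine case $N_1\le\mathrm{AGL}_1(\ell)$, handled by Schur--Zassenhaus together with the block-induction principle. None of this structure---the fixed-point-detector idea, Rudio's argument, or the block induction---appears in your proposal, and these are the steps that carry the weight.
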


Let \(K/\Q\) be a Galois extension containing a number field \(k = \Q(a)\).  Each unramified rational prime \(p\) splits into distinct primes \(p = \mathfrak{p}_1 \cdots \mathfrak{p}_r\) in~\(k\) and the \emph{decomposition type} of \(p\), consisting of the residue degrees \((f_1, \ldots, f_r)\), coincides with the cycle type of the Frobenius automorphism of any prime \(\mathfrak{P} \mid p\) of \(K\) acting on the conjugates of \(a\).  Therefore:
  
\begin{theorem} \label{thm:prime-splitting}
  Let \(k_1\) be a number field of degree $n$ such that some unramified rational prime \(p\) has decomposition type \ref{item:twos}, \ref{item:not-divide}, or \ref{item:prime} above, or such that \(k_1\) has exactly one complex place or exactly one real place.  Then every number field \(k_2\) with \(\zeta_{k_1} = \zeta_{k_2}\) is isomorphic to \(k_1\).
\end{theorem}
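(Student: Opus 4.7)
The plan is to deduce Theorem~\ref{thm:prime-splitting} from Theorem~\ref{thm:gassmann-triples} by producing, in each hypothesized situation, an explicit element of a suitable Galois group whose cycle type on an appropriate coset space matches one of the patterns \ref{item:twos}, \ref{item:not-divide}, or \ref{item:prime}.

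First I would set up the Galois framework. Given any $k_2$ with $\zeta_{k_1}=\zeta_{k_2}$, Perlis's theorem supplies $[k_1:\Q]=[k_2:\Q]=n$ and $\operatorname{disc}(k_1)=\operatorname{disc}(k_2)$; in particular $k_1$ and $k_2$ share the same set of ramified rational primes.  Let $K$ be the compositum of the Galois closures of $k_1$ and $k_2$, and set $G=\operatorname{Gal}(K/\Q)$ and $H_i=\operatorname{Gal}(K/k_i)$.  The translation recalled in the introduction turns the hypothesis $\zeta_{k_1}=\zeta_{k_2}$ into the statement that $(G;H_1,H_2)$ is a Gassmann triple, and triviality of this triple is equivalent to $k_1\cong k_2$.

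For the decomposition-type hypothesis I would then invoke the given prime $p$.  Since $p$ is unramified in $k_1$ and, by the discriminant equality, also in $k_2$, it is unramified in both Galois closures and hence in $K$.  For any prime $\mathfrak{P}\mid p$ of $K$, the associated Frobenius $\sigma\in G$ acts on the coset space $G/H_1$ with cycle type equal to the decomposition type of $p$ in $k_1$, exactly as recalled in the paragraph between the two theorems.  By assumption this cycle type has shape \ref{item:twos}, \ref{item:not-divide}, or \ref{item:prime}, so Theorem~\ref{thm:gassmann-triples} applies and $(G;H_1,H_2)$ is trivial.

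For the archimedean cases I would instead use complex conjugation.  After fixing any embedding $K\hookrightarrow\C$, complex conjugation restricts to an element $c\in G$, and its action on $G/H_1$ is equivariantly identified with the natural action on the $n$ embeddings of $k_1$ into $\C$.  If $k_1$ has exactly one real place, the cycle type of $c$ is $(1,2,2,\dots,2)$, which is case~\ref{item:twos}.  If $k_1$ has exactly one complex place, the cycle type is $(1,1,\dots,1,2)$, and $\ell=2$ meets the hypotheses of case~\ref{item:prime}: $2$ is prime, $2\nmid 1$, $2\neq 11$, and $(q^k-1)/(q-1)=1+q+\cdots+q^{k-1}\geq 7$ for every prime power $q\geq 2$ and every $k\geq 3$.

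I do not expect a genuine obstacle here: once Theorem~\ref{thm:gassmann-triples} is in hand, the argument is essentially a dictionary translation between Galois elements and cycle types, using Frobenius for the finite places and complex conjugation for the infinite ones.  The only spot where a brief computation is required is the verification that $\ell=2$ satisfies all the exclusions of case~\ref{item:prime} in the one-complex-place scenario, as carried out above.
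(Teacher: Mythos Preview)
Your proof is correct and follows essentially the same route as the paper: set up the Gassmann triple via a common Galois extension, use Frobenius to realize the finite-place hypothesis as a cycle type on $G/H_1$, and use complex conjugation for the archimedean hypotheses, then invoke Theorem~\ref{thm:gassmann-triples}.  If anything you are slightly more careful than the paper, since you invoke the discriminant equality from Perlis to justify that $p$ is unramified in the chosen $K$ before speaking of a well-defined Frobenius, a point the paper leaves implicit.
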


Note that for \(q=2\) in \ref{item:prime}, primes of the form \(\frac{q^k-1}{q-1}\) are just the Mersenne primes.  So conjecturally, we have to exclude infinitely many primes \(\ell\) in \ref{item:prime}.  But \(\ell = 2, 3, 5, 17, 19, \ldots\) are the first few primes which work independently of \(n\) provided they do not divide any \(a_i\).  We point out that the special case \(\ell=2\) and correspondingly also the case of a unique complex place in Theorem~\ref{thm:prime-splitting}, was previously obtained by Chinburg--Hamilton--Long--Reid~\cite{Chinburg-et-al}*{Corollary~1.4} and independently by Stark~\cite{Stark}, a student of Perlis.

The primes in~\ref{item:prime} are genuine exceptions arising from Gassmann triples in \(\operatorname{PSL}(2,11)\) and \(\operatorname{GL}(k,q)\) respectively.  Using the LMFDB database~\cite{lmfdb} one can exhibit corresponding number fields. For instance, consider number fields~\(k_1\) and~\(k_2\) of degree~\(7\) defined by the polynomials
\begin{gather*}
  x^7 -7x +3,  \\
  x^7 +14x^4 -42x^2 -21x +9.
\end{gather*}
These number fields are not isomorphic but arithmeticaly equivalent.  The common Galois closure has the automorphism group \(G = \operatorname{GL}(3,2)\) of the Fano plane \(\mathbb{P}^2(\mathbb{F}_2)\) as Galois group and \(k_1\) and \(k_2\) correspond to the stabilizer subgroup \(H_1\) and \(H_2\) of a point and a line, respectively.  These subgroups are almost conjugate but not conjugate.  The group \(G\) has two conjugacy classes of order seven elements, each of size \(24\).  Since \(7 \nmid |H_1|=|H_2|\), all these 48 elements must act as full \(7\)-cycles on both \(G/H_1\) and \(G/H_2\).  Since \(48/168 = 6/21 \approx 0.29\), Chebotarev's density theorem says that roughly \(29\%\) of all unramified primes in \(k_1\) and \(k_2\) are inert, including the primes \(2\), \(5\), and \(11\) (see LMFDB database~\cite{lmfdb}).  Also note that Theorem~\ref{thm:prime-splitting} implies that all number fields of degree~7 which have an arithmetically equivalent sibling, like \(k_1\) and \(k_2\), must have signature \((3,2)\).

Similarly, according to~\cite{lmfdb}, the polynomials
\begin{gather*}
  x^{11} -\! 2x^{10} +\! 3x^9 +\! 2x^8 -\! 5x^7 +\! 16x^6 -\! 10x^5 +\! 10x^4 +\! 2x^3 -\! 3x^2 +\! 4x -\! 1, \\
  x^{11} - 2x^{10} + x^9 - 5x^8 + 13x^7 - 9x^6 + x^5 - 8x^4 + 9x^3 - 3x^2 - 2x + 1
\end{gather*}
define arithmetically equivalent number fields \(k_1\) and \(k_2\) of degree \(11\) whose common Galois closure has Galois group \(\operatorname{PSL}(2,11)\), a simple group of order \(660\).  It contains two conjugacy classes of elements of order \(11\), each of size \(60\).  Since \(11\) divides \(660\) only once, these \(120\) elements act as full \(11\)-cycles on the cosets of the subgroups corresponding to \(k_1\) and \(k_2\).  Since \(120/660 = 2/11 \approx 0.18\), about \(18\%\) of all unramified primes are inert, including the primes \(2\), \(5\), and \(11\).

\medskip
Apart from the number theoretic application, Gassmann triples also feature prominently in Sunada's construction~\cite{Sunada} of isospectral and iso-length manifolds.  To explain this, let \(M_0\) be a closed Riemannian manifold.  We adopt the terminology of \cite{Sunada}*{Section~4}:  A \emph{prime geodesic cycle} \(\mathfrak{p}\) in \(M_0\) is an oriented cycle represented by a closed geodesic in \(M_0\) which traces out the image exactly once.  The preimage of \(\mathfrak{p}\) under a finite sheeted covering map \(p \colon M \rightarrow M_0\), where \(M\) carries the metric induced from \(M_0\), decomposes into a collection \(\mathfrak{P}_1, \ldots, \mathfrak{P}_r\) of prime geodesic cycles in \(M\).  If \(c_i \colon S^1 \rightarrow M\) and \(c \colon S^1 \rightarrow M_0\) are (constant speed) representatives of \(\mathfrak{P}_i\)  and \(\mathfrak{p}\), respectively, then \(\pi \circ c_i (z) = c(z^{f_i})\) for \(z \in S^1 \subset \C\) and a unique positive integer \(f_i\) called the \emph{degree} of \(\mathfrak{P}_i\) over \(\mathfrak{p}\).

\begin{definition}
  We say that two finite coverings coverings \(p_1 \colon M_1 \rightarrow M_0\) and \(p_2 \colon M_2 \rightarrow M_0\) are \emph{arithmetically equivalent} if each prime geodesic cycle \(\mathfrak{p}\) of \(M_0\) has the same (unordered) tuple of degrees \((f_1, \ldots, f_r)\) in \(M_1\) and \(M_2\).
\end{definition}
  
Sunada~\cite{Sunada}*{Corollary to Theorem~2} showed that arithmetically equivalent coverings \(M_1\) and \(M_2\) have the same \emph{length spectrum}, meaning for every \(x \ge 0\), there exists a a bijection \(\phi_x \colon \mathcal{G}^1_x \rightarrow \mathcal{G}^2_x\) where
\[ \mathcal{G}^i_x = \{ \mathfrak{P} \text{ prime geodesic cycle in } M_i \colon \operatorname{length} (\mathfrak{P}) = x \}. \]
Using coverings corresponding to Gassmann triples with non-isomorphic subgroups, Sunada has given many examples of arithmetically equivalent coverings which are not isometric.  Theorem~\ref{thm:gassmann-triples} combined with Sunada's work, however, gives the following rigidity theorem. 

\begin{theorem} \label{thm:geodesic}
  Let \(p_1 \colon M_1 \rightarrow M_0\) be a connected \(n\)-sheeted covering of a closed Riemannian manifold \(M_0\).  Assume some prime geodesic cycle in \(M_0\) splits into prime geodesic cycles in \(M_1\) of degrees \ref{item:twos}, \ref{item:not-divide}, or \ref{item:prime} above.  If \(p_2 \colon M_2 \rightarrow M_0\) is a connected finite sheeted covering of \(M_0\) which is arithmetically equivalent to \(p_1\), then \(M_2\) is isometric to \(M_1\).
\end{theorem}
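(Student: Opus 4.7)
The plan is to reduce the geometric claim to Theorem~\ref{thm:gassmann-triples} by means of Sunada's dictionary between finite coverings and subgroups.

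I would first pass to a common finite normal cover $q \colon M \to M_0$ that factors through both $p_1$ and $p_2$. This exists because the normal core of $\pi_1(M_i)$ in $\pi_1(M_0)$ has finite index for $i=1,2$, and the intersection of these cores is itself of finite index. Write $G$ for the deck transformation group of $q$ and $H_i \le G$ for the subgroup with $M_i = H_i \backslash M$, so that $[G:H_1]=n$ and $G$ acts on $M$ by isometries for the lifted Riemannian metric.

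Next I would translate the hypotheses. Each prime geodesic cycle $\mathfrak{p}$ in $M_0$ has a Frobenius conjugacy class $[g_\mathfrak{p}] \subset G$, obtained by lifting a representative loop in $M_0$ to a path in $M$ and recording the deck transformation carrying the starting point to the endpoint. By \cite{Sunada}, the degrees of the prime geodesic cycles in $M_i$ above $\mathfrak{p}$ are exactly the cycle lengths of $g_\mathfrak{p}$ acting on $G/H_i$. The hypothesis on $p_1$ therefore yields some $g \in G$ whose cycle type on $G/H_1$ is of the form \ref{item:twos}, \ref{item:not-divide}, or \ref{item:prime}. Moreover, every conjugacy class of $G$ arises as a Frobenius class of some prime geodesic cycle (the geometric analogue of Chebotarev's density theorem for closed Riemannian manifolds), and arithmetic equivalence of $p_1$ and $p_2$ forces matching cycle types of $g_\mathfrak{p}$ on $G/H_1$ and $G/H_2$ for every such $\mathfrak{p}$. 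Combining these two facts yields matching cycle types of every $g \in G$, which is equivalent to $(G; H_1, H_2)$ being a Gassmann triple.

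Theorem~\ref{thm:gassmann-triples} then applies and gives $\gamma \in G$ with $H_2 = \gamma H_1 \gamma^{-1}$. Left multiplication by $\gamma$ is an isometry of $M$ that descends to an isometry $M_1 = H_1 \backslash M \to H_2 \backslash M = M_2$ intertwining the projections to $M_0 = G \backslash M$, which is the desired isometry of coverings. The main obstacle is the step that promotes arithmetic equivalence (a statement about splitting behaviour of \emph{actual} prime geodesic cycles) to the full Gassmann condition on $(G; H_1, H_2)$. This rests on the equidistribution of Frobenius classes over the conjugacy classes of $G$, which is part of the dictionary implicit in Sunada's work and follows from the prime geodesic theorem for closed Riemannian manifolds; everything else is essentially bookkeeping once this dictionary is in place.
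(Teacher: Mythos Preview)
Your proof is correct and follows essentially the same route as the paper: pass to a common finite normal cover, identify the deck group \(G\) and the subgroups \(H_1, H_2\), use Sunada's dictionary to turn arithmetic equivalence into the Gassmann condition for \((G;H_1,H_2)\), apply Theorem~\ref{thm:gassmann-triples}, and conclude that conjugate subgroups yield isometric coverings. The only difference is expository: the paper cites \cite{Sunada}*{Theorem~2} directly for the step ``arithmetically equivalent \(\Rightarrow\) Gassmann'', whereas you unpack this step into the Frobenius correspondence plus the fact that every conjugacy class of \(G\) is the Frobenius class of some prime geodesic cycle (Sunada's Lemma~1, the geometric Chebotarev input). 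One small remark: you only need surjectivity of the Frobenius map onto conjugacy classes, not equidistribution, so invoking the full prime geodesic theorem is more than necessary---but this does not affect correctness.
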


  In some cases, most notably if \(M_0\) is a rank one locally symmetric space, the length spectrum and the eigenvalue spectrum of the Laplace--Beltrami operator on functions determine each other so that arithmetically equivalent coverings are also isospectral~\cite{Sunada}*{Section~4}, see also \citelist{\cite{Huber} \cite{deVerdiere}}.  On the other hand, examples of Ikeda~\cite{Ikeda} show that iso-spectral manifolds must not be arithmetically equivalent which prevents us from concluding a pure ``spectral rigidity theorem'' from our results.  Nonetheless, H.\,Pesce~\cite{Pesce} has proven that ``generically'', isospectral manifolds do arise from Gassmann triples.  In that case the occurrence of geodesic cycle types~\ref{item:twos}, \ref{item:not-divide}, or \ref{item:prime} implies that the manifolds are in fact isometric (``one can hear the shape of the drum''.)  We remark that there is also a Sunada type construction of isospectral graphs using Gassmann triples~\cite{Brooks}*{Theorem~1.1}.  For graphs, however, a result of Brooks \cite{Brooks}*{Theorem 0.2} only gives an asymptotic converse.
  
  \medskip
  
  As a final application, it follows that the assumption in \cite{Kammeyer-Spitler}*{Theorem~1\,(ii)} that the number field \(k\) with precisely one real place should be locally determined is always satisfied thanks to Theorem~\ref{thm:prime-splitting}.  So the commensurability class of a Chevalley group of type \(A_n\) or \(C_n\) over a number field with precisely one real place is determined by the commensurability class of the profinite completion unless it has a proper Grothendieck subgroup.  In fact, this question was the starting point for the paper at hand.
  
  \medskip
  
  Let us outline the proof of Theorem~\ref{thm:gassmann-triples}.  To obtain part~\ref{item:twos} of the theorem, we have to conclude the triviality of Gassmann triples from the occurrence of a \emph{point involution}, meaning an involution with exactly one fixed point.  To do so, the main auxiliary result is that the permutation character of a Gassmann triple can detect whether two point involutions have the same fixed point.  This will be explained in Section~\ref{sec:point-involutions}.  To show parts~\ref{item:not-divide} and~\ref{item:prime} of Theorem~\ref{thm:gassmann-triples}, first note that by taking powers of the permutation, we can assume \(a_1 = \cdots = a_r = 1\).  So we have to conclude the triviality of Gassmann triples from the occurrence of certain cycles of prime length.  To do so, we need to dive a bit deeper into the theory of permutation groups.  In particular, we will show an induction principle saying that if a transitive permutation action generated by cycles only produces trivial Gassmann triples, then one has the same conclusion if this action occurs as a block of a permutation action such that all elements outside the block are moved by cycles.  The excluded primes \(l\) in~\ref{item:prime} result from a theorem of Feit.  It asserts that if \((G; H_1, H_2)\) is a Gassmann triple such that the permutation actions of \(G\) on \(G/H_1\) and \(G/H_2\) are doubly transitive and contain a full cycle, then \(G\) has a faithful representation as semilinear projective transformations over a finite field (this involves the classification of finite simple groups).  All this will be explained in Section~\ref{sec:cycles}.  For completeness, we conclude Theorems~\ref{thm:prime-splitting} and~\ref{thm:geodesic} in a short Section~\ref{sec:conclusion}.

  \medskip
  This work was financially supported by the German Research Foundation via the Research Training Group ``Algebro-Geometric Methods in Algebra, Arithmetic, and Topology'', DFG 284078965, and via the Priority Program ``Geometry at Infinity'', DFG 441848266.

  \section{Permutation groups with point involutions} \label{sec:point-involutions}

  In this section, we prove Theorem~\ref{thm:gassmann-triples} if condition~\ref{item:twos} is satisfied.  We begin with some standard facts on permutation actions.  By a \emph{\(G\)-set}, we mean a permutation representation of $G$ on a finite set $\Omega$.  Every $G$-set $\Omega$ gives rise to a linear representation of $G$ on $\Q[\Omega]$.  The isomorphism class of the linear representation is uniquely determined by the character
\[
	\chi_\Omega(g) = |\Fix(g,\Omega)|
\]
where $\Fix(g,\Omega)$ denotes the set of $g$-fixed points in $\Omega$.  The $G$-set~$\Omega$, however, is in general not determined up to isomorphism by the character~$\chi_\Omega$.
\begin{definition}
We say that two $G$-sets $\Omega, \Omega'$ are \emph{Gassmann equivalent}, if $\chi_\Omega = \chi_{\Omega'}$.  A $G$-set $\Omega$ will be called \emph{Gassmann solitary}, if every $G$-set that is Gassmann equivalent to $\Omega$ is already isomorphic to $\Omega$.
\end{definition}

It is easy to see that for a \emph{Gassmann triple} \((G; H_1, H_2)\), the $G$-sets $G/H_1$ and $G/H_2$ are Gassmann equivalent.  Conversely, if \(\Omega\) and \(\Omega'\) are Gassmann equivalent \emph{transitive} \(G\)-sets, then for every \(x \in \Omega\) and \(x' \in \Omega'\), the triple \((G; \operatorname{Stab}^\Omega_G(x), \operatorname{Stab}_G^{\Omega'}(x'))\) consisting of the stabilizer subgroups of \(x\) and~\(x'\) is a Gassmann triple.  A proof for both statements can be found in~\cite{Klingen:similarities}*{Theorem~1.3, p.\,77}.  It is moreover clear that a Gassmann triple \((G; H_1, H_2)\) is trivial if and only if $G/H_1$ and $G/H_2$ are isomorphic \(G\)-sets.

Let us now fix a finite set \(\Omega\).  A permutation \(\sigma \in \operatorname{Sym}(\Omega)\) is called a \emph{point involution} if \(\sigma^2 = \operatorname{id}\) and if \(\sigma\) fixes a unique point \(x \in \Omega\).

\begin{proposition} \label{prop:odd-even}
  Let \(\sigma, \tau \in \operatorname{Sym}(\Omega)\) be point involutions.  Then \(\sigma\) has the same fixed point as \(\tau\) if and only if \(\sigma \tau\) fixes an odd number of points.
\end{proposition}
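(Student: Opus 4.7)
The plan is to analyse directly the set \(X = \Fix(\sigma\tau, \Omega)\) and show that \(|X|\) is odd exactly when \(\sigma\) and \(\tau\) have a common fixed point. The crucial observation is that, because \(\sigma\) is an involution, \(z \in X\) if and only if \(\sigma(z) = \tau(z)\). This reformulation makes \(\sigma\) and \(\tau\) enter symmetrically on \(X\) and suggests looking for an involution on \(X\) itself.

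Concretely, I would define \(\phi \colon X \to X\) by \(\phi(z) = \sigma(z) = \tau(z)\). The first routine step is to verify that \(\phi\) actually takes values in \(X\): if \(w = \sigma(z) = \tau(z)\), then applying \(\sigma\) and \(\tau\) respectively and using that both square to the identity gives \(\sigma(w) = z = \tau(w)\), so \(w \in X\). The same computation shows \(\phi^2 = \operatorname{id}\), so \(\phi\) is an involution on the finite set \(X\), and hence \(|X|\) has the same parity as the number of fixed points of \(\phi\).

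The remaining step is to identify those fixed points. A point \(z \in X\) with \(\phi(z) = z\) satisfies \(\sigma(z) = z\) and \(\tau(z) = z\); since \(\sigma\) and \(\tau\) are point involutions, this forces \(z\) to be simultaneously the unique fixed point \(x_\sigma\) of \(\sigma\) and the unique fixed point \(x_\tau\) of \(\tau\). Hence, if \(x_\sigma = x_\tau\), then this common point actually lies in \(X\) (both \(\sigma\) and \(\tau\) fix it) and is the only fixed point of \(\phi\), so \(|X|\) is odd. If \(x_\sigma \neq x_\tau\), then \(\phi\) has no fixed point in \(X\) and \(|X|\) is even.

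I do not expect a serious obstacle; the argument is essentially a one-page pairing argument. The only point that requires a little care is confirming that \(\phi\) is well-defined as a map \(X \to X\) (rather than merely \(X \to \Omega\)), which reduces to the fact that \(\sigma\) and \(\tau\) are involutions. Once that is in place, the parity conclusion is immediate from the standard observation that an involution on a finite set has a number of fixed points of the same parity as the cardinality of the set.
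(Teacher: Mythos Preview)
Your argument is correct and is essentially the same pairing idea as the paper's proof: both rest on the observation that a fixed point \(z\) of \(\sigma\tau\) satisfies \(\sigma(z)=\tau(z)\) and that \(z \mapsto \sigma(z)\) pairs such points. The paper carries this out by treating the two directions separately and checking by hand that neither \(x_\sigma\) nor \(x_\tau\) lies in \(X\) when they differ, whereas you package the same pairing as an involution \(\phi\) on \(X\) and read off the parity from its fixed-point count; this is a cleaner presentation of the same argument.
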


\begin{proof}
  First suppose \(\sigma\) and \(\tau\) have the same fixed point \(x \in \Omega\).  Then \(x\) is a fixed point of \(\sigma \tau\) as well.  If \(y \in \Omega\) is a fixed point of \(\sigma \tau\) different from \(x\), then \(\sigma(y) = \tau(y)\) so \(\sigma \tau (\sigma(y)) = \sigma \tau (\tau(y)) = \sigma (y)\) which means \(\sigma(y)\) is another fixed point of \(\sigma \tau\) and \(\sigma(y)\) is different from \(y\) because \(x\) is the only fixed point of \(\sigma\).  This shows that the fixed points of \(\sigma \tau\) different from \(x\) come in pairs, hence the total number of fixed points of \(\sigma \tau\) is odd.

  Conversely, suppose \(\sigma(x) = x\) and \(\tau(y) = y\) with \(x \neq y\).  Then \(\sigma \tau(y) = \sigma(y) \neq y\) because \(x\) is the only fixed point of \(\sigma\).  Similarly, \(\tau(x) \neq x\) because \(y\) is the only fixed point of \(\tau\).  So \(\sigma \tau(x) \neq x \) because \(x\) is the only point that \(\sigma\) sends to \(x\).  For the remaining points, the same argument as above shows that fixed points of \(\sigma \tau\) come in pairs, hence the total number of fixed points of \(\sigma \tau\) is even.
\end{proof}

\begin{proposition}
Let \(G\) be a finite group and let \(\Omega\) be a transitive $G$-set.  Assume that some element of \(G\) acts as a point involution on \(\Omega\), then $\Omega$ is Gassmann solitary.
\end{proposition}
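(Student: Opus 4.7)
The plan is to use Proposition~\ref{prop:odd-even} to reconstruct \(\Omega\) up to \(G\)-isomorphism from purely character-theoretic data. Fix \(\sigma_0 \in G\) acting as a point involution on \(\Omega\) with unique fixed point \(x_0\), and let \(\Omega'\) be any \(G\)-set Gassmann equivalent to \(\Omega\). First I would observe that \(\sigma_0\) is automatically a point involution on \(\Omega'\): since \(\chi_{\Omega'}(\sigma_0^2) = \chi_\Omega(\sigma_0^2) = |\Omega| = \chi_{\Omega'}(1)\), the square \(\sigma_0^2\) acts as the identity on \(\Omega'\), while \(\chi_{\Omega'}(\sigma_0) = 1\) supplies a unique fixed point \(y_0 \in \Omega'\). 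The same argument applied to any \(G\)-conjugate of \(\sigma_0\) shows that every such conjugate is a point involution on both \(\Omega\) and \(\Omega'\).

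Let \(S \subseteq G\) denote the conjugacy class of \(\sigma_0\). On \(S\) I would introduce two equivalence relations: declare \(\tau \sim_\Omega \tau'\) if \(\tau\) and \(\tau'\) have the same fixed point in \(\Omega\), and analogously \(\sim_{\Omega'}\) for \(\Omega'\). By Proposition~\ref{prop:odd-even}, \(\tau \sim_\Omega \tau'\) holds if and only if \(\chi_\Omega(\tau\tau')\) is odd, and correspondingly for \(\Omega'\). Because \(\chi_\Omega = \chi_{\Omega'}\), the two relations \(\sim_\Omega\) and \(\sim_{\Omega'}\) coincide on \(S\); this is the crucial observation.

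Sending \(\tau \in S\) to its fixed point in \(\Omega\) gives a \(G\)-equivariant injection \(S/\!\sim_\Omega \hookrightarrow \Omega\), where \(G\) acts on \(S\) by conjugation, and likewise a \(G\)-equivariant injection \(S/\!\sim_{\Omega'} \hookrightarrow \Omega'\). Transitivity of \(\Omega\) combined with the identity \(g\sigma_0 g^{-1} \cdot (g x_0) = g x_0\) shows that the first image is all of \(\Omega\), so this map is a bijection. The second map has image \(G \cdot y_0\); combining \(\sim_\Omega = \sim_{\Omega'}\) with \(|\Omega| = \chi_\Omega(1) = \chi_{\Omega'}(1) = |\Omega'|\) forces \(G \cdot y_0 = \Omega'\). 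Hence \(\Omega'\) is transitive, the second map is also a bijection, and composing them delivers the desired \(G\)-isomorphism \(\Omega \cong \Omega'\).

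The main obstacle I anticipate is not a hard estimate but rather the subtle point that \(\Omega'\) need not a priori be transitive; transitivity has to be deduced, and the equivalence-class bookkeeping on \(S\) is precisely what accomplishes this. Once the equality \(\sim_\Omega = \sim_{\Omega'}\) has been extracted from Proposition~\ref{prop:odd-even}, the remainder is a clean \(G\)-equivariant diagram chase.
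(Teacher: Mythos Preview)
Your proof is correct and follows essentially the same approach as the paper: both use Proposition~\ref{prop:odd-even} to show that the relation ``two conjugate point involutions share a fixed point'' is encoded by the parity of \(\chi_\Omega\) on their product, hence coincides for \(\Omega\) and \(\Omega'\), and both convert this into a \(G\)-equivariant bijection. Your equivalence-class packaging on the conjugacy class \(S\) is a mild reformulation of the paper's direct construction of \(f\colon \Omega\to\Omega'\); in fact you are slightly more careful than the paper in verifying that \(\sigma_0^2\) acts trivially on \(\Omega'\) and in deducing transitivity of \(\Omega'\) rather than tacitly assuming it.
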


\begin{proof}
   Let $\Omega'$ be a $G$-set which is Gassmann equivalent to $\Omega$.
  Let \(x \in \Omega\).  Since some element \(\sigma \in G\) acts as a point involution on \(\Omega\) and \(G\) acts transitively on \(\Omega\), some conjugate \(\sigma_x\) of \(\sigma\) acts as a point involution with \(\sigma_x (x) = x\).  Since \(\chi_{\Omega} =  \chi_{\Omega'}\), we may conclude from \(\chi_\Omega(\sigma_x) = 1\)  that the element \(\sigma_x\) acts as a point involution on \(\Omega'\), too.  Hence there exists a unique element \(y \in \Omega'\) with \(\sigma_x(y) = y\) and we set \(f(x) = y\).

  We show that \(f\) is well-defined.  Suppose \(\tau \in G\) is another point involution fixing \(x\).  Then by Proposition~\ref{prop:odd-even}, the composition \(\sigma \tau\) acts on \(\Omega\) with \(\chi_\Omega(\sigma \tau)\) odd.  Hence \(\chi_{\Omega'}(\sigma \tau)\) is also odd.  Applying the proposition again shows that \(\sigma\) and \(\tau\) have the same fixed point in \(\Omega'\).  Interchanging moreoever the roles of \(\Omega\) and \(\Omega'\), we obtain an inverse of \(f\), so \(f\) is a well-defined bijection.

  It remains to show that \(f\) is \(G\)-equivariant.  To this end, let \(x \in \Omega\) and \(\tau \in G\).  Setting \(f(x) = y\), we have \(\sigma x = x\) and \(\sigma y = y\) for some \(\sigma \in G\) acting as a point involution both on \(\Omega\) and \(\Omega'\).  It follows that \(\tau \sigma \tau^{-1}\) is a point involution fixing \(\tau(x) \in \Omega\) and \(\tau(y) \in \Omega'\).  Thus \(f(\tau(x)) = \tau(y) = \tau(f(x))\).
\end{proof}

This last proposition clearly implies Theorem~\ref{thm:gassmann-triples} if condition~\ref{item:twos} holds true:  If the \(G\)-action on \(G/H_1\) has a point involution and \((G; H_1, H_2)\) is a Gassmann triple, we obtain a \(G\)-equivariant bijection \(f \colon G/H_1 \rightarrow G/H_2\), and if, say, \(f(H_1) = gH_2\), then \(g^{-1}H_1g = H_2\).

  \section{Permutation groups with cycles} \label{sec:cycles}
  
  In this section, we prove Theorem~\ref{thm:gassmann-triples} if condition~\ref{item:not-divide} or~\ref{item:prime} holds true.  For a subset \(S \subseteq \mathrm{Sym}(\Omega)\), we set \(\Fix(S,\Omega) = \bigcap_{\sigma \in S} \Fix(\sigma,\Omega)\).
  
\begin{definition}
We say that a $G$-set $\Omega$ admits a \emph{fixed point detector}, if there is an element $g \in G$ such that  $\Fix(g,\Omega) = \Fix(G,\Omega)$.
\end{definition}

It is well-known that every transitive $G$-set admits a fixed point detector~\cite{Wielandt}*{3.11} which in this case is just an element acting without fixed points.

\begin{lemma}\label{lem:fpd}
Every permutation group generated by a set of cycles admits a fixed point detector.
\end{lemma}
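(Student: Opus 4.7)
The plan is to reduce the problem to the transitive case, which is the fact cited from Wielandt just above the lemma. Write \(G = \langle \sigma_1, \ldots, \sigma_k\rangle\) with each \(\sigma_i\) a cycle in \(\mathrm{Sym}(\Omega)\). The first observation is that \(\supp(\sigma_i)\) lies in a single \(G\)-orbit of \(\Omega\): the subgroup \(\langle \sigma_i\rangle\) is transitive on \(\supp(\sigma_i)\), and \(\langle\sigma_i\rangle\)-orbits refine \(G\)-orbits.

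For each non-trivial \(G\)-orbit \(O \subseteq \Omega\), I would set
\[
  G_O \;=\; \bigl\langle\, \sigma_i \,:\, \supp(\sigma_i) \subseteq O \,\bigr\rangle \;\le\; G.
\]
The key step is to verify that \(G_O\) acts transitively on \(O\). Given \(x, y \in O\) and \(h \in G\) with \(hx = y\), write \(h\) as a word in the \(\sigma_i^{\pm 1}\) and trace the action of successive letters starting from \(x\). Any letter that moves the current point (which lies in \(O\)) must, by the first observation, have its support contained in \(O\) and therefore belongs to \(G_O\); letters that fix the current point may be deleted. This exhibits \(y = hx\) as the result of applying a word in the generators of \(G_O\) to \(x\), proving transitivity.

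Granted transitivity of \(G_O\) on \(O\), I would apply the cited fact to obtain an element \(g_O \in G_O\) acting on \(O\) without fixed points. Since the generators of \(G_O\) all fix \(\Omega \setminus O\) pointwise, so does \(g_O\). Hence for distinct non-trivial orbits the elements \(g_O\) have pairwise disjoint supports in \(\Omega\) and therefore commute. Their product \(g \in G\), taken over all non-trivial orbits, acts fixed-point-freely on every non-trivial orbit and trivially on every singleton orbit, so \(\Fix(g, \Omega) = \Fix(G, \Omega)\), and \(g\) is the desired fixed point detector. The only non-routine step is the transitivity claim for \(G_O\); the rest amounts to assembling the pieces.
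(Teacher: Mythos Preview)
Your proof is correct and follows essentially the same approach as the paper: decompose \(\Omega\) into fixed points and non-trivial orbits, observe that each generating cycle is supported in a single orbit, form the subgroup \(G_O\) generated by the cycles supported in \(O\), use the transitive case to find a fixed-point-free element in each \(G_O\), and multiply. The only difference is that you spell out the transitivity of \(G_O\) on \(O\) via a word-tracing argument, whereas the paper obtains it by noting that the \(G_O\) for distinct orbits commute and generate \(G\), so the restriction of \(G\) to \(O\) factors through \(G_O\).
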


\begin{proof}
Let $G \subseteq \mathrm{Sym}(\Omega)$ be generated by a set $S$ of cycles.
Decompose $\Omega = F \cup \bigcup_{i=1}^k \mathcal{O}_i$ into the set of fixed points $F$ and a disjoint union of non-trivial $G$-orbits.
Every cycle $\sigma \in S$ is supported in exactly one orbit $\mathcal{O}_i$. This implies that $G$ decomposes as a direct product $G_1\cdot G_2\cdots G_k$ where $G_i$ is generated by the cycles supported in $\mathcal{O}_i$. Since $G_i$ acts transitively on $\mathcal{O}_i$ there is an element $g_i \in G_i$ without fixed points. We deduce that the element $g = g_1\cdots g_k \in G $ has $F = \Fix(g,\Omega)$.
\end{proof}
The existence of a fixed point detector immediately implies:
\begin{corollary}\label{cor:existence-of-fixed}
If $G \subseteq \mathrm{Sym}(\Omega)$ is generated by cycles, then 
$G$ has exactly $\min_{g\in G} \Fix(g,\Omega)$ fixed points in $\Omega$. 
\end{corollary}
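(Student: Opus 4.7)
The plan is to deduce the corollary directly from Lemma~\ref{lem:fpd}, with essentially no further combinatorial input required. Note first that the statement should be parsed as saying $G$ has exactly $\min_{g \in G} |\Fix(g,\Omega)|$ fixed points, i.e.\ that $|\Fix(G,\Omega)| = \min_{g\in G}|\Fix(g,\Omega)|$.

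First I would invoke Lemma~\ref{lem:fpd} to obtain an element $g_0 \in G$ with $\Fix(g_0,\Omega) = \Fix(G,\Omega)$. Then I would record the trivial inclusion $\Fix(G,\Omega) = \bigcap_{h \in G} \Fix(h,\Omega) \subseteq \Fix(h,\Omega)$ for every $h \in G$. Taking cardinalities, this yields the lower bound
\[
|\Fix(G,\Omega)| \;\le\; \min_{h \in G} |\Fix(h,\Omega)|.
\]

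For the reverse inequality, I would use the fixed point detector: since $g_0 \in G$, the minimum on the right is at most $|\Fix(g_0,\Omega)|$, and by construction $|\Fix(g_0,\Omega)| = |\Fix(G,\Omega)|$. Combining both inequalities gives the desired equality, and the number of fixed points of $G$ in $\Omega$ is exactly $|\Fix(G,\Omega)|$ by definition.

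There is no real obstacle here; the corollary is a one-line unpacking of the definition of a fixed point detector together with the elementary observation that $G$-fixed points form the intersection of all pointwise fixed-point sets. The only mildly subtle aspect is recognizing that the quantity $\min_{g\in G}|\Fix(g,\Omega)|$ is automatically attained because $G$ is finite, so no convergence or infimum-versus-minimum issue arises.
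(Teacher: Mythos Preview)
Your proof is correct and matches the paper's approach exactly: the paper states the corollary with no proof, noting only that ``the existence of a fixed point detector immediately implies'' it, which is precisely the one-line argument you spell out.
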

%
%
\ignore{
\begin{corollary}\label{cor:fixedpointscontained}
Let $S_1,S_2 \subseteq \mathrm{Sym}(\Omega)$ be two sets of cycles. 
Then $\Fix(S_1,\Omega) \subseteq \Fix(S_2,\Omega)$ if and only if 
\[
	|\Fix(g,\Omega)| \geq |\Fix(S_1,\Omega)|
\]
for all $g \in \langle S_1 \cup S_2 \rangle$.
\end{corollary}
\begin{proof}
Let $F= \Fix(S_1,\Omega)$.
Assume that $F \subseteq \Fix(S_2,\Omega)$. Then $G = \langle S_1, S_2 \rangle$ fixes the points fixed by $S_1$ and hence $|\Fix(g,\Omega)| \geq |\Fix(S_1,\Omega)|$ holds for all $g \in G$.

Conversely, if $|\Fix(g,\Omega)| \geq |F|$ for all $g \in G$, then $G$ has at least $|F|$
 fixed points, which are necessarily the points in $F$. In particular, every element in $S_2$ fixes the points in $F$.
\end{proof}
}
Let $\Omega$ be a $G$-set.
A \emph{block} $B \subseteq \Omega$ is a non-empty subset such that for all $g \in G$ the sets $gB$ and $B$ are either equal or disjoint.
We denote the setwise stabilizer of $B$ by $G_{\{B\}}$.

\begin{proposition} \label{prop:solitary-using-blocks}
Let $G$ be a finite group and let $\Omega$ be a transitive $G$-set.
Assume that there is a set of cycles $S \subseteq G$ such that $B = \Fix(S,\Omega)$ is a block
and such that the subgroup $C_{\{B\}}\subseteq G_{\{B\}}$ generated by all cycles in \(G_{\{B\}}\) acts transitively on $B$.
If the action of $G_{\{B\}}$ on $B$ is Gassmann solitary, then
 $\Omega$ is Gassmann solitary.
\end{proposition}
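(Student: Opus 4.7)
My plan is to take an arbitrary $G$-set $\Omega'$ with $\chi_{\Omega'} = \chi_\Omega$ and produce a $G$-equivariant bijection $\Omega' \to \Omega$. The first observation I rely on is that cycles transfer: any $\sigma \in G$ acting as an $\ell$-cycle on $\Omega$ also acts as an $\ell$-cycle on $\Omega'$. This is because the character of a permutation representation of the cyclic group $\langle \sigma \rangle$ determines its orbit structure by M\"obius inversion on the divisors of $\ell$, so $\chi_\Omega|_{\langle \sigma \rangle} = \chi_{\Omega'}|_{\langle \sigma \rangle}$ forces $\Omega \cong \Omega'$ as $\langle \sigma \rangle$-sets. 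In particular, each element of $S$ and each cycle lying in $G_{\{B\}}$ acts on $\Omega'$ as a cycle of the same length.

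Next, I set $B' := \Fix(\langle S \rangle, \Omega')$. Since $\langle S \rangle$ is generated by cycles acting on both $\Omega$ and $\Omega'$, Lemma~\ref{lem:fpd} supplies fixed point detectors on both sides; combined with $\chi_\Omega = \chi_{\Omega'}$, this forces $|B| = |B'|$. I then verify that $B'$ is a block with setwise stabilizer $H := G_{\{B\}}$. For every $g \in G$ the group $\langle S \cup gSg^{-1}\rangle$ is again generated by cycles on both sides, so Corollary~\ref{cor:existence-of-fixed} yields
\[
  |B \cap gB| = |\Fix(\langle S \cup gSg^{-1}\rangle, \Omega)| = |\Fix(\langle S \cup gSg^{-1}\rangle, \Omega')| = |B' \cap gB'|.
\]
Since $B$ is a block, the common value is $|B|$ or $0$, forcing $gB' = B'$ iff $gB = B$ and $gB' \cap B' = \emptyset$ otherwise. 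Thus $B'$ is a block with setwise stabilizer $H$, and fixing a transversal $g_1, \ldots, g_k$ of $G/H$ gives matched decompositions $\Omega = \bigsqcup_i g_i B$ and $\Omega' = \bigsqcup_i g_i B'$.

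The decisive step will be to construct an $H$-equivariant bijection $\varphi : B \to B'$; once available, the rule $f(g_i x) := g_i \varphi(x)$ defines a $G$-equivariant bijection $\Omega \to \Omega'$, with well-definedness and equivariance following from the matched block decomposition. Since $B$ is Gassmann solitary as an $H$-set by hypothesis, it suffices to show $\chi_B(h) = \chi_{B'}(h)$ for every $h \in H$. Using the identity $\Fix(h, B) = \Fix(\langle S \cup \{h\}\rangle, \Omega)$ and its analogue on $\Omega'$, this reduces to the fixed point comparison $|\Fix(\langle S, h\rangle, \Omega)| = |\Fix(\langle S, h\rangle, \Omega')|$. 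The equality is immediate from Lemma~\ref{lem:fpd} whenever $\langle S, h \rangle$ admits a fixed point detector, in particular when it is generated by cycles. The hard part will be to secure such a detector for general $h \in H$: here I would exploit the transitivity of $C_{\{B\}}$ on $B$, which gives a set-theoretic factorization $H = C_{\{B\}} \cdot L$ for a point stabilizer $L \leq H$, and use it to replace $h$ by a product of cycles up to a controlled correction that leaves the fixed point set on $B$ unchanged. Pinning down this reduction is the main obstacle; once it is overcome, Gassmann solitariness of $B$ yields $\varphi$ and the proof concludes.
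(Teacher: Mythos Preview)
Your setup through the identification $G_{\{B\}}=G_{\{B'\}}$ and the reduction to proving $|\Fix(h,B)|=|\Fix(h,B')|$ for all $h\in H$ is correct and matches the paper exactly. The gap is precisely where you locate it: you have not produced a fixed point detector for $\langle S,h\rangle$, and the suggested factorization $H=C_{\{B\}}\cdot L$ does not resolve this. Writing $h=cl$ with $c\in C_{\{B\}}$ and $l\in L$ replaces one non-cycle $h$ by another non-cycle $l$; the group $\langle S,h\rangle$ is still not generated by cycles, and there is no reason it should admit a fixed point detector that is simultaneously a detector on~$\Omega'$. So the main obstacle is not merely ``pinned down'' but genuinely unaddressed.

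The paper avoids this obstacle entirely by a different device. Rather than seeking a single detector for $\langle S,h\rangle$, it introduces the auxiliary group
\[
K=\bigl\langle\, g t g^{-1}\;:\; g\notin H,\ t\in C_{\{B\}}\,\bigr\rangle,
\]
which is generated by cycles, fixes $B$ (and $B'$) pointwise, and acts transitively on every other block $gB$ (resp.\ $gB'$); here the hypothesis that $C_{\{B\}}$ is transitive on $B$ is used. One then computes, for $h\in H$ with $f$ fixed cosets in $G/H$,
\[
\sum_{k\in K}\chi_\Omega(hk)=|K|\cdot|\Fix(h,B)|+|K|\cdot(f-1),
\]
and the identical formula holds with $\Omega'$ and $B'$. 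Since the left side and $f$ depend only on $\chi_\Omega=\chi_{\Omega'}$ and on $H$, this yields $|\Fix(h,B)|=|\Fix(h,B')|$ directly. The averaging over $K$ is the missing idea; once you have it, your conclusion via Gassmann solitarity of $B$ as an $H$-set goes through exactly as you describe.
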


\begin{proof}
Assume that $\Omega'$ is a $G$-set which is Gassmann equivalent to $\Omega$.
A cycle $\tau$ of length $\ell$ can be recognized using the character: It satisfies $\chi_\Omega(\tau^j) = n-\ell$ for all $j$ that are not multiples of $\ell$ and $\chi_\Omega(\tau^{m\ell}) = n$.  In particular, every element of $G$ which acts like a cycle on $\Omega$ acts like a cycle of the same length on $\Omega'$.  We define $B' = \Fix(S,\Omega')$.  We note that by Corollary~\ref{cor:existence-of-fixed}, applied to the faithfully acting quotient of \(G\), we have $|B| = |B'|$.  The crucial observation is that the cycles in $S$ allow us to describe $H = G_{\{B\}}$. Indeed, $g \in G$ lies in $H$ or outside \(H\) if and only if $\langle S \cup g S g^{-1} \rangle$ has \(|B|\) fixed points or no fixed points, respectively.  By Corollary~\ref{cor:existence-of-fixed}, this condition can be checked using \(\chi_\Omega\), so that $G_{\{B\}} = H =  G_{\{B'\}}$ and \(B'\) is a block, too.

Now we show that the character of the action of $H$ on $B$ can be computed from \(\chi_\Omega\).  A non-trivial cycle $\tau$ is contained in $B$ if and only if $\tau$ commutes with all elements in $S$ and $\langle \tau, S \rangle$ has less than $|B|$ fixed points.  This implies that also \(C_{\{B\}} = C_{\{B'\}}\). Let
\[ K = \langle\{ gtg^{-1} \mid g \not\in H, \ t \in C_{\{B\}}\}\rangle. \]
Then $K$ is a group generated by cycles whose orbits in $\Omega$ disjoint from \(B\) are exactly the sets $gB$ different from $B$. Similarly, the $K$-orbits on $\Omega'$ disjoint from \(B'\) are the sets $gB'$ different from $B'$.  Let $h \in H$ and let $f$ be the number of $h$-fixed points in $G/H$.  We define $E_h = \{(x,k) \in \Omega\times K\mid hkx = x\}$. Then 
\begin{align*}
	\sum_{k \in K} \chi_{\Omega}(hk) &= \sum_{(x,k) \in E_h} 1 \\
	&= \sum_{x \in B} |\{k \in K \mid hkx  = x\}| + \sum_{x \not\in B} |\{k \in K \mid hkx  = x\}| \\
	&= |K| \cdot |\Fix(h,B)| + \sum_{x \not\in B} |\{k \in K \mid hkx  = x\}|.
\end{align*}
If $x$ lies in a block $gB \neq B$ that is not fixed by $h$, then  $|\{k \in K \mid hkx  = x\}| = 0$.  On the other hand, if $hgB =gB$, then, since $K$ acts transitively on $gB$, there are exactly $|K_x|$ elements $k \in K$ with $hkx=x$. Using $|K/K_x| = |B|$ 
we deduce \[ \sum_{k \in K} \chi_{\Omega}(hk) = |K| \cdot |\Fix(h,B)| +  |K| \cdot (f-1). \]
Hence $|\Fix(h,B)|$ is determined by $\chi_{\Omega}$.  By assumption, the action of $H$ on $B$ is Gassmann solitary and so $B$, $B'$ are isomorphic as $H$-sets.  In particular, there are two points $b \in B$ and $b' \in B'$ which have the same stabilizer in $H$ and since $B$ and \(B'\) are blocks, these stabilizers agree with the stabilizers in $G$.
\end{proof}

For $B = \{x\}$ we conclude:

\begin{corollary}\label{lem:solitary-using-fixed-points}
Let $G$ be a finite group and let $\Omega$ be a transitive $G$-set.
If there is some $x \in \Omega$ and a set of cycles $S \subseteq G$ such that
\[
	\{x\} = \Fix(S,\Omega),
\]
then $\Omega$ is Gassmann solitary.
\end{corollary}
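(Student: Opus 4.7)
The plan is to deduce the corollary as an essentially immediate special case of Proposition~\ref{prop:solitary-using-blocks}, applied to the singleton \(B = \{x\}\). So first I would verify that this singleton satisfies the hypotheses of the proposition with the given set of cycles \(S\).

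The set \(B = \{x\}\) is trivially a block, since any translate \(gB = \{gx\}\) is a singleton and hence is either equal to or disjoint from \(B\). By hypothesis, \(B = \Fix(S,\Omega)\), which is the first condition. The setwise stabilizer \(G_{\{B\}}\) coincides with the point stabilizer \(G_x\), and whatever the subgroup \(C_{\{B\}} \subseteq G_x\) generated by cycles in \(G_x\) turns out to be, it acts transitively on the one-point set \(B\) for trivial reasons. Finally, the action of \(G_{\{B\}} = G_x\) on the one-point set \(B\) is Gassmann solitary: the associated character is identically \(1\), so any Gassmann equivalent \(G_x\)-set must be a one-point set with trivial action, and hence is isomorphic to \(B\) as a \(G_x\)-set.

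With all hypotheses in place, Proposition~\ref{prop:solitary-using-blocks} yields that \(\Omega\) is Gassmann solitary. There is no real obstacle here; the only thing that requires any care is matching the singleton case against the definitions (block, setwise stabilizer, transitive action on a single point), but each of these verifications is essentially formal.
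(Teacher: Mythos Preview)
Your proof is correct and matches the paper's approach exactly: the paper simply remarks ``For \(B = \{x\}\) we conclude'' and states the corollary, so your verification that the singleton satisfies all the hypotheses of Proposition~\ref{prop:solitary-using-blocks} is just a careful unpacking of what the paper leaves implicit.
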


Recall that a \(G\)-set \(\Omega\) is called \emph{primitive} if it is transitive and has only the trivial blocks \(\{x\}\) for \(x \in \Omega\) and \(\Omega\).

\begin{proposition} \label{prop:primitive-and-cycles}
Let $G$ be a finite group and let $\Omega$ be a primitive $G$-set of degree $n$.
If some element of $G$ acts as a cycle of length $\ell$ with $1 < \ell < n$, then $\Omega$ satisfies the assumption of Corollary \ref{lem:solitary-using-fixed-points} and  is Gassmann solitary.
\end{proposition}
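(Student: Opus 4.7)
The plan is to verify the hypothesis of Corollary~\ref{lem:solitary-using-fixed-points} by producing a point $x \in \Omega$ together with a set $S \subseteq G$ of cycles whose common fixed-point set equals $\{x\}$. I would build $S$ from $G$-conjugates of the given $\ell$-cycle and exploit primitivity to force the intersection of their fixed-point sets to shrink to a singleton.

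Concretely, fix $\sigma \in G$ acting as an $\ell$-cycle and set $F = \Fix(\sigma, \Omega)$, so that $1 \le |F| = n - \ell < n$. Since conjugates of cycles are cycles and $\Fix(g\sigma g^{-1}, \Omega) = gF$, every nonempty subset of the form $B = \bigcap_{i=1}^{k} g_i F$ with $g_1,\ldots,g_k \in G$ is the common fixed-point set of a finite collection of cycles in $G$. Let $\mathcal{I}$ denote the family of all such nonempty intersections and choose $B \in \mathcal{I}$ of minimum cardinality; this is possible because $F \in \mathcal{I}$.

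The crux of the argument will be to show that $B$ is a block of $\Omega$. For any $g \in G$ the translate $gB = \bigcap_{i} (gg_i)F$ again belongs to $\mathcal{I}$, and if $gB \cap B$ is nonempty, then it too lies in $\mathcal{I}$ and has cardinality at most $|B|$, so by minimality $gB \cap B = B = gB$. Primitivity of $\Omega$ then leaves only the possibilities $|B| = 1$ or $B = \Omega$, and since $|B| \le |F| < n$, we must have $B = \{x\}$ for some $x \in \Omega$. Taking $S = \{g_i \sigma g_i^{-1} : 1 \le i \le k\}$ then furnishes the required set of cycles, and Corollary~\ref{lem:solitary-using-fixed-points} yields Gassmann solitariness. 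I do not foresee a serious obstacle; the only subtle point is to close the family of fixed-point sets under arbitrary finite intersections before extracting a minimum, so that $G$-translation remains inside the class to which the minimality argument can be applied.
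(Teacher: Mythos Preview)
Your argument is correct. Both your proof and the paper's produce a finite set of conjugates of $\sigma$ whose common fixed-point set is a singleton, but the mechanisms differ. The paper fixes a point $x \in F = \Fix(\sigma,\Omega)$ at the outset and invokes Rudio's lemma \cite{Wielandt}*{8.1}: for each $y \in F \setminus \{x\}$, primitivity supplies some $g \in G$ with $gx \in F$ but $gy \notin F$, so that $g^{-1}\sigma g$ fixes $x$ and moves $y$; collecting these conjugates together with $\sigma$ yields the required $S$. You instead close the family of translates $gF$ under finite intersection, observe that a member of minimum cardinality is a block, and let primitivity force it to be a singleton. Your route is self-contained and avoids the external reference, while the paper's is a touch more direct once Rudio's lemma is in hand and names the point $x$ explicitly from the start rather than letting it emerge from a minimality argument. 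In substance the two approaches are close cousins: Rudio's lemma is itself proved by essentially the same ``minimal intersection of translates is a block'' idea.
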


\begin{proof}
 Let $\sigma \in G$ denote a cycle of length $\ell$ with $1< \ell< n$.  We denote by $\Delta = \Omega \setminus \supp(\sigma) = \Fix(\sigma, \Omega)$ the complement of the support of $\sigma$.  We verify that the assumption of Corollary  \ref{lem:solitary-using-fixed-points} is satisfied.  We fix some $x \in \Delta$, i.e., a point which is fixed by $\sigma$.  The action is primitive, so Rudio's argument~\cite{Wielandt}*{8.1} shows that for all $y \in \Delta$  with $y \neq x$ there is an element $g\in G$ with $gy \in \supp(\sigma)$ and $gx \in \Delta$.  Therefore the cycle $\sigma_y = g^{-1}\sigma g$ fixes $x$ but moves $y$.  In particular, $x$ is the only point fixed by all elements in $S = \{\sigma\} \cup \{\sigma_y \mid y \in \Delta\setminus\{x\} \; \}$.  This proves the claim.
\end{proof}

\begin{proposition} \label{prop:coprime-primitive}
Let $G$ be a finite group and let $\Omega$ be a transitive $G$-set of degree $n$.
If $G$ is generated by cycles whose length \(\ell\) is either
\begin{enumerate}[label=(\arabic*)]
\item\label{it:coprime} coprime to $n$ or
\item\label{it:prime} a prime,
\end{enumerate}
then $\Omega$ is primitive.
\end{proposition}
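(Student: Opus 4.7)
The plan is to argue by contradiction. Suppose $\Omega$ carries a non-trivial block system $\mathcal{B}$, consisting of $k \ge 2$ blocks of common size $m > 1$, with $n = km$. Since $G$ is transitive on $\Omega$, it is transitive on $\mathcal{B}$, so the induced action on $\mathcal{B}$ is non-trivial. Consequently, at least one generating cycle, say $\sigma \in G$ of length $\ell$, must act non-trivially on $\mathcal{B}$.

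The heart of the matter is a block-versus-support analysis for a single cycle. A cycle $\sigma$ of length $\ell$ has exactly one non-trivial orbit on $\Omega$, namely $\supp(\sigma)$ of size $\ell$. Hence the blocks meeting $\supp(\sigma)$ form a single $\sigma$-orbit $B_1,\dots,B_r$ with $r\mid\ell$, while every block disjoint from $\supp(\sigma)$ is fixed pointwise by $\sigma$. Because $\sigma$ acts non-trivially on $\mathcal{B}$ we have $r>1$. The crucial observation is that under this condition $\sigma$ has \emph{no} fixed point inside $B_1\cup\cdots\cup B_r$: any fixed point $z\in B_i$ would force $B_i=\sigma B_i=B_{i+1\bmod r}\neq B_i$, which is absurd because blocks are either equal or disjoint. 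Therefore $B_1\cup\cdots\cup B_r\subseteq\supp(\sigma)$, and the reverse inclusion is automatic, so we obtain
\[
\ell \;=\; r\,m.
\]

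Each case then closes in one line. In case~\ref{it:coprime}, $m\mid\ell$ together with $m\mid n$ forces $m\mid\gcd(\ell,n)=1$, contradicting $m>1$. In case~\ref{it:prime}, $r$ is a divisor of the prime $\ell$ with $r>1$, so $r=\ell$ and hence $m=\ell/r=1$, the same contradiction. So no non-trivial block system can exist and $\Omega$ is primitive. I do not expect a real obstacle here: the whole argument reduces to the identity $\ell=rm$, which collapses the two hypotheses uniformly, and the only step requiring any care is verifying that in a non-trivially cycled family of blocks every point must actually be moved.
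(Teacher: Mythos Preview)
Your proof is correct and follows essentially the same approach as the paper's: both arguments pick a generating cycle $\sigma$ that moves a block, show that any block moved by $\sigma$ lies entirely inside $\supp(\sigma)$ (since a fixed point in a moved block is impossible), and then use the resulting divisibility $m\mid\ell$ together with $m\mid n$ to reach a contradiction in each case. Your version makes the identity $\ell=rm$ a bit more explicit than the paper, which simply cites that $|B|$ divides both $n$ and $\ell$, but the substance is identical.
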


\begin{proof}
Assume that $B \subsetneq \Omega$ is a nonempty block.
Since the action of $G$ is transitive and $G$ is generated by cycles, there is a cycle $\sigma$ such that $\supp(\sigma)$ intersects $B$ and $\Omega\setminus B$. Suppose that there is some $b \in B$ that is fixed by $\sigma$. Take $c \in B \cap \supp(\sigma)$, then $B$ contains $\sigma^j(c)$ for all $j$ and hence contains $\supp(\sigma)$. This contradicts our choice of $\sigma$. It follows that $B$ is contained in $\supp(\sigma)$. But then $|B|$ is a common divisor of $n$ and $\ell$ (compare~\cite{Wielandt}*{Exercise~6.5}.)  In case \ref{it:coprime}, we deduce $|B| = 1$. In case \ref{it:prime}, we have $|B| = 1$ or $|B| = \ell$. The second option $|B| = \ell$ implies that $B = \supp(\sigma)$ which contradicts our choice of $\sigma$. So in both cases $B$ is a singleton and $\Omega$ is a primitive $G$-set.
\end{proof}

We are now prepared to prove Theorem~\ref{thm:gassmann-triples} if condition~\ref{item:not-divide} holds true.  Note that the \(\operatorname{lcm}(a_1, \ldots, a_r)\)-th power of the permutation in~\ref{item:not-divide} provides a permutation of cycle type \((1, \ldots, 1, \ell)\) so that case~\ref{item:not-divide} of Theorem~\ref{thm:gassmann-triples} follows from the following proposition.

\begin{proposition}
Let $G$ be a finite group and let $\Omega$ be a transitive $G$-set of degree $n$.  Suppose that some $g \in G$ acts as a cycle of length $\ell > 1$, where $\ell$ is coprime to $n$. Then $\Omega$ is Gassmann solitary.
\end{proposition}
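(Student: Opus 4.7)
The plan is to reduce to the previous primitive-action results via the block system furnished by the normal closure \(K\) of \(g\). This is the subgroup of \(G\) generated by all \(G\)-conjugates of \(g\); being normal in a transitive action, its orbits form a \(G\)-block system with blocks of common size \(m\) dividing \(n\). Every \(K\)-orbit meets the support of some conjugate of \(g\), forcing \(m \ge \ell\), and \(\gcd(\ell,m) \mid \gcd(\ell,n) = 1\) combined with \(\ell > 1\) gives the strict inequality \(\ell < m\).

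If \(K\) is transitive (\(m = n\)), the action is already primitive: Proposition~\ref{prop:coprime-primitive} applied to \(K\) yields primitivity of \(\Omega\) as a \(K\)-set, hence as a \(G\)-set, and Proposition~\ref{prop:primitive-and-cycles} applied to the \(\ell\)-cycle \(g\) with \(1 < \ell < n\) concludes the proof.

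Otherwise, \(\Omega\) carries the non-trivial \(K\)-orbit block system. I would pick a \(K\)-orbit \(B \subsetneq \Omega\) and let \(S\) be the set of \(G\)-conjugates of \(g\) supported in \(\Omega \setminus B\). To invoke Proposition~\ref{prop:solitary-using-blocks} I need to check that \(B = \Fix(S,\Omega)\), that the cycles of \(G_{\{B\}}\) act transitively on \(B\), and that the \(G_{\{B\}}\)-action on \(B\) is Gassmann solitary. Transitivity is immediate because the conjugates of \(g\) supported in \(B\) all lie in \(G_{\{B\}}\) and generate the transitive restriction of \(K\) to \(B\); Gassmann solitariness then follows by showing the action of \(G_{\{B\}}\) on \(B\) is primitive---this via Proposition~\ref{prop:coprime-primitive} applied to the restricted \(K\)-action on \(B\), which is generated by \(\ell\)-cycles with \(\gcd(\ell,m) = 1\)---and then applying Proposition~\ref{prop:primitive-and-cycles} using \(1 < \ell < m\).

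The main obstacle is the verification that \(\Fix(S,\Omega) = B\). It amounts to showing that within every \(K\)-orbit \(B' \ne B\), the conjugates of \(g\) supported in \(B'\) already generate the restriction of \(K\) to \(B'\) and hence act there without a common fixed point. This is a direct consequence of \(K\) itself being generated by \(G\)-conjugates of \(g\): restricting to the \(K\)-invariant set \(B'\), only those conjugates with support in \(B'\) act nontrivially, and the subgroup they generate furnishes the full transitive restriction of \(K\) to \(B'\).
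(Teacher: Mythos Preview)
Your argument is correct, and it shares the same skeleton as the paper's: pass to the normal subgroup generated by the \(\ell\)-cycles, observe that the orbits form a block system with blocks of size \(m\) coprime to \(\ell\), and use Proposition~\ref{prop:coprime-primitive} to see that the restricted action on each block is primitive with a proper \(\ell\)-cycle. The difference is in how you finish. In the intransitive case you invoke Proposition~\ref{prop:solitary-using-blocks}: you realise \(B\) as \(\Fix(S,\Omega)\) for the conjugates of \(g\) supported outside \(B\), check that \(C_{\{B\}}\) is transitive on \(B\), and then show the \(G_{\{B\}}\)-action on \(B\) is Gassmann solitary via Proposition~\ref{prop:primitive-and-cycles}. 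The paper instead bypasses the block proposition entirely and goes straight to Corollary~\ref{lem:solitary-using-fixed-points}: since Proposition~\ref{prop:primitive-and-cycles} already produces a set \(S_1\) of cycles in one orbit \(\mathcal{O}_1\) with \(\Fix(S_1,\mathcal{O}_1)=\{x_1\}\), one simply adjoins generating cycles for the other orbits to obtain a set of cycles whose global fixed-point set in \(\Omega\) is the single point \(\{x_1\}\). This avoids your case distinction on \(m=n\) versus \(m<n\) and is a couple of lines shorter. Your route, on the other hand, exercises Proposition~\ref{prop:solitary-using-blocks} in a situation where the block action is itself primitive, which is a nice warm-up for the genuinely imprimitive situation in the prime-length case that follows.
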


\begin{proof}
Let $N$ be the normal subgroup generated by all cycles of length $\ell$. Then $\Omega$ decomposes into disjoint $N$-orbits $\Omega = \mathcal{O}_1 \cup \dots \cup \mathcal{O}_r$ and $N$ as a direct product $N_1 \cdot N_2 \cdots N_r$  where $N_i$ is the subgroup generated by $\ell$-cycles in $\mathcal{O}_i$. Since the action is transitive, we have $t = |\mathcal{O}_1| = \dots = |\mathcal{O}_r|$ and $t$ divides $n$.  The action of $N_i$ on $\mathcal{O}_i$ is generated by $\ell$-cycles and  $\ell$ is coprime to $t$, so that $N_i$ acts primitively by Proposition~\ref{prop:coprime-primitive}.  In particular, by Proposition~\ref{prop:primitive-and-cycles}, each $N_i$ satisfies the assumption of Corollary \ref{lem:solitary-using-fixed-points}. Moreover, each $N_j$ ($j\neq i$) is generated by cycles and thus the group $G$ satisfies the assumption of Corollary \ref{lem:solitary-using-fixed-points}.  It follows that $\Omega$ is Gassmann solitary.
\end{proof}

Finally, we prove Theorem~\ref{thm:gassmann-triples} if condition~\ref{item:prime} holds true:

\begin{proposition}
Let \(G\) be a finite group and let \(\Omega\) be a transitive \(G\)-set.  Suppose that some \(g \in G\) acts as a cycle of prime length \(\ell\), where \(\ell\) is different from \(11\) and from \(\frac{q^k-1}{q-1}\) for prime powers \(q\) and \(k \geq 3\).  Then \(\Omega\) is Gassmann solitary.
\end{proposition}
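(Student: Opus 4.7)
The plan is to mimic the strategy of the preceding coprime case, reducing the problem via the normal closure of $\ell$-cycles, and then to handle the residual base case by combining Burnside's classical theorem on transitive groups of prime degree with Feit's classification of doubly transitive groups containing a full cycle.

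Concretely, I would let $N$ be the normal subgroup of $G$ generated by all $\ell$-cycles.  The $N$-orbits $\mathcal{O}_1,\ldots,\mathcal{O}_r$ have a common cardinality $t$, and $N = N_1 \cdots N_r$ decomposes as a direct product with each $N_i$ generated by the $\ell$-cycles supported in $\mathcal{O}_i$.  Since the support of every $\ell$-cycle lies in a single $N$-orbit, $\ell \le t$, and as $\ell$ is prime, Proposition~\ref{prop:coprime-primitive}\,(2) ensures that each $N_i$ acts primitively on $\mathcal{O}_i$.  If $\ell < t$, Proposition~\ref{prop:primitive-and-cycles} applied to the primitive $N_i$-action on $\mathcal{O}_i$ yields a set of $\ell$-cycles whose common fixed set in $\mathcal{O}_i$ is a single point $\{x\}$; combining this with all $\ell$-cycles in the remaining $N_j$ (which admit no common fixed point in $\mathcal{O}_j$ since $N_j$ is transitive there) produces a set $S$ of cycles in $G$ with $\Fix(S,\Omega) = \{x\}$, and Corollary~\ref{lem:solitary-using-fixed-points} finishes the argument exactly as in the coprime case.

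The remaining case $\ell = t$ is the heart of the matter.  Here the orbits $\mathcal{O}_i$ are blocks of size $\ell$ for the $G$-action on $\Omega$, and I would invoke Proposition~\ref{prop:solitary-using-blocks} with $B = \mathcal{O}_1$: taking $S$ to consist of all $\ell$-cycles in $N_2,\ldots,N_r$ gives $\Fix(S,\Omega) = B$, while $N_1 \subseteq C_{\{B\}}$ acts transitively on $B$, so all hypotheses of the proposition are met except for the Gassmann solitarity of the $G_{\{B\}}$-action on $B$.  This reduces the problem to the following base case: \emph{any transitive permutation group of prime degree $\ell$ containing a full $\ell$-cycle is Gassmann solitary}.

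To resolve the base case, I would replace $G_{\{B\}}$ by its faithful image $\overline{H} \le \operatorname{Sym}(B)$ (legitimate since the kernel of the action fixes every point of any Gassmann equivalent set) and invoke Burnside's theorem: either $\overline{H}$ is $2$-transitive, or $\overline{H}$ is solvable and contained in $\operatorname{AGL}(1,\ell)$.  In the solvable case, $\overline{H} = T \rtimes K$ with $T \cong \Z/\ell$ the unique normal Sylow $\ell$-subgroup and $K$ of order coprime to $\ell$; every subgroup of $\overline{H}$ of index $\ell$ is then a complement of $T$, and Schur--Zassenhaus forces any two such complements to be conjugate, yielding Gassmann solitarity.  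In the $2$-transitive case, a hypothetical nontrivial Gassmann triple $(\overline{H}; H_1, H_2)$ of index $\ell$ would have both induced actions $2$-transitive (since $\langle \chi, \chi \rangle = 2$ is detected by the character) and each containing a full cycle, whereupon Feit's theorem forces $\overline{H}$ to embed as a group of semilinear projective transformations over a finite field.  Inspection of Feit's classification shows that the nontrivial such triples account precisely for the excluded primes $\ell = 11$ (from the exotic $\operatorname{PSL}(2,11)$-action on cosets of $A_5$) and $\ell = (q^k-1)/(q-1)$ with $k \ge 3$ (from point versus hyperplane stabilizers in $P\Gamma L(k,q)$), both ruled out by hypothesis.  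The main obstacle is thus the invocation of Feit's theorem, whose proof rests on the classification of finite simple groups.
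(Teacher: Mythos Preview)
Your argument is correct and follows essentially the same route as the paper: reduce via the normal closure of the $\ell$-cycles and Proposition~\ref{prop:solitary-using-blocks} to a block of prime size $\ell$, then split into the solvable/$\operatorname{AGL}(1,\ell)$ case (handled by Schur--Zassenhaus) and the $2$-transitive case (handled by Feit). The one noteworthy difference is that you apply the Burnside dichotomy and Feit's theorem to the faithful image $\overline{H}$ of $G_{\{B\}}$, whereas the paper applies the analogous dichotomy (via \cite{DixonMortimer}*{Theorem~3.5B}) to $N_1$; your choice is the one that plugs directly into the hypothesis of Proposition~\ref{prop:solitary-using-blocks}, so your write-up is in fact slightly tighter at this step.
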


\begin{proof}
  We argue similarly as before.  So $N$ denotes the normal subgroup generated by all \(\ell\)-cycles, $\Omega = \mathcal{O}_1 \cup \dots \cup \mathcal{O}_r$ is the disjoint decomposition into \(N\)-orbits, and $N$ is the direct product $N_1 \cdot N_2 \cdots N_r$  where $N_i$ is generated by \(\ell\)-cycles in $\mathcal{O}_i$.  Again $t = |\mathcal{O}_1| = \dots = |\mathcal{O}_r|$ because the \(G\)-action on \(\Omega\) is transitive.  The action of $N_i$ on $\mathcal{O}_i$ is transitive, too, and generated by \(\ell\)-cycles, hence it is primitive by Proposition~\ref{prop:coprime-primitive}.  If \(t > \ell\), it follows as above from Proposition~\ref{prop:primitive-and-cycles} and Corollary~\ref{lem:solitary-using-fixed-points} that \(\Omega\) is Gassmann solitary so it remains to consider the case \(\ell = t\).

  In that case, we obtain from \cite[Theorem~3.5B]{DixonMortimer} that either $N_1$ acts $2$-transitively on $\mathcal{O}_1$ or \(N_1 \le \operatorname{AGL}_1(\ell)\), meaning \(N_1\) has a normal cyclic subgroup of order \(\ell\) and acts like a group of affine transformations on \(\mathcal{O}_1\) under some identification of $\mathcal{O}_1$ with the finite field \(\mathbb{F}_\ell\).  In the former case, it is a consequence of the classfication of finite simple groups drawn by Feit in~\cite{Feit}*{Corollary~4.5} that if $\mathcal{O}_1$ is not Gassmann solitary as $N_1$-set, then \(\ell=11\) or \(\ell = \frac{q^k-1}{q-1}\) for a prime power~\(q\) and these possibilities are excluded in the Proposition.  So it remains to consider the latter case, in which \(N_1 \cong C_\ell \rtimes C_m\) is a semidirect product of cyclic groups with \(m \mid (\ell - 1)\) and \(C_m\) is the point stabilizer of the zero element under the identification of \(\mathcal{O}_1\) with \(\mathbb{F}_\ell\).  Since all subgroups of order \(m\) in \(N_1\) are conjugate  by the Schur-Zassenhaus theorem, the \(N_1\)-action on \(\mathcal{O}_1\) is Gassmann solitary.  But \(\mathcal{O}_1\) is an orbit of a normal subgroup of \(G\), hence it is a block.  So by Proposition~\ref{prop:solitary-using-blocks}, also the \(G\)-action on \(\Omega\) is Gassmann solitary.  
\end{proof}

\section{Proofs of the applications} \label{sec:conclusion}

In this section, we conclude Theorems~\ref{thm:prime-splitting} and~\ref{thm:geodesic} from the introduction.

\begin{proof}[Proof of Theorem~\ref{thm:prime-splitting}]
  Let \(K/\Q\) be a Galois extension containing both~\(k_1\) and~\(k_2\), set \(G = \operatorname{Gal}(K/\Q)\) and let \(H_1\) and \(H_2\) be the subgroups of \(G\) fixing \(k_1\) and \(k_2\) pointwise, respectively.  As we already pointed out in the introduction, the condition \(\zeta_{k_1} = \zeta_{k_2}\) is equivalent to \((G;H_1,H_2)\) being a Gassmann triple, and if some unramified rational prime \(p\) has one of the decomposition types \ref{item:twos}, \ref{item:not-divide}, or \ref{item:prime} in \(k_1\), then the Frobenius automorphism of any prime ideal \(\mathfrak{P}\) over \(p\) in \(K\) provides an element in \(G\) that acts with the corresponding cycle type on \(G/H_1\).  If \(k_1\) has a unique complex place, then after embedding \(K \subseteq \C\), complex conjugation yields an involution \(g \in G\), unique up to conjugation, which acts on \(G/H_1\) with cycle type \((1, \ldots, 1, 2)\), so condition~\ref{item:prime} of Theorem~\ref{thm:gassmann-triples} is satisfied.  Similarly, if \(k_1\) has a unique real place, we obtain a unique conjugacy class in \(G\) acting with cycle type \((1, 2, \ldots, 2)\) on \(G/H_1\), so condition~\ref{item:twos} of Theorem~\ref{thm:gassmann-triples} is satisfied.  In any case, Theorem~\ref{thm:gassmann-triples} shows that the Gassmann triple \((G; H_1, H_2)\) is trivial.  Hence \(H_1\) is conjugate to \(H_2\) and correspondingly \(k_1\) is isomorphic to \(k_2\).
\end{proof}

\begin{proof}[Proof of Theorem~\ref{thm:geodesic}.]
  Fix base points \(x_1 \in M_1\) and \(x_2 \in M_2\) over a common base point \(x_0 \in M_0\).  Let \(G = \pi_1(M_0, x_0)\) be the fundamental group and let \(H_1 = {p_1}_* \pi_1(M_1, x_1)\) and \(H_2 = {p_1}_* \pi_1(M_2,x_2)\) be the characteristic subgroups of \(G\) corresponding to the coverings~\(p_1\) and~\(p_2\).  Then
  \[ N = \bigcap_{g \in G}\, g^{-1} (H_1 \cap H_2) g \]
  is a finite index normal subgroup of~\(G\) and the corresponding finite sheeted regular covering
  \[ p_0 \colon (M, x) \rightarrow (M_0, x_0) \]
  has both \(p_1\) and \(p_2\) as intermediate covering.  Consider the triple \((G/N; H_1/N, H_2/N)\).  Since~\(p_1\) and~\(p_2\) are arithmetically equivalent, \cite{Sunada}*{Theorem~2} gives that it is a Gassmann triple.  Condition~\ref{item:twos}, \ref{item:not-divide}, or \ref{item:prime} effects by Theorem~\ref{thm:gassmann-triples} that the Gassmann triple is trivial.  So~\(p_1\) and~\(p_2\) are conjugate coverings and in particular~\(M_1\) is isometric to~\(M_2\).
\end{proof}
 
\begin{bibdiv}[References]

  \begin{biblist}
  
  \bib{Brooks}{article}{
   author={Brooks, Robert},
   title={Non-Sunada graphs},
   language={English, with English and French summaries},
   journal={Ann. Inst. Fourier (Grenoble)},
   volume={49},
   date={1999},
   number={2},
   pages={707--725},
   issn={0373-0956},
   review={\MR{1697378}},
}

\bib{Chinburg-et-al}{article}{
   author={Chinburg, T.},
   author={Hamilton, E.},
   author={Long, D. D.},
   author={Reid, A. W.},
   title={Geodesics and commensurability classes of arithmetic hyperbolic
   3-manifolds},
   journal={Duke Math. J.},
   volume={145},
   date={2008},
   number={1},
   pages={25--44},
   issn={0012-7094},
   review={\MR{2451288}},
 }

 \bib{deVerdiere}{article}{
   author={Colin de Verdi\`ere, Yves},
   title={Spectre du laplacien et longueurs des g\'{e}od\'{e}siques p\'{e}riodiques. I, II},
   language={French},
   journal={Compositio Math.},
   volume={27},
   date={1973},
   pages={83--106; ibid. {\bf 27} (1973), 159--184},
   issn={0010-437X},
   review={\MR{0348798}},
 }

  \bib{DixonMortimer}{book}{
   author={Dixon, John D.},
   author={Mortimer, Brian},
   title={Permutation groups},
   series={Graduate Texts in Mathematics},
   volume={163},
   publisher={Springer-Verlag, New York},
   date={1996},
   pages={xii+346},
   isbn={0-387-94599-7},
   review={\MR{1409812}},
}
  
  \bib{Feit}{article}{
   author={Feit, Walter},
   title={Some consequences of the classification of finite simple groups},
   conference={
      title={The Santa Cruz Conference on Finite Groups (Univ. California,
      Santa
 Cruz, Calif., 1979)},
   },
   book={
      series={Proc. Sympos. Pure Math},
      volume={37},
      publisher={Amer. Math. Soc., Providence, R.I.},
   },
   date={1980},
   pages={pp 175--181},
   review={\MR{0604576}},
}

\bib{Huber}{article}{
   author={Huber, Heinz},
   title={Zur analytischen Theorie hyperbolischen Raumformen und
   Bewegungsgruppen},
   language={German},
   journal={Math. Ann.},
   volume={138},
   date={1959},
   pages={1--26},
   issn={0025-5831},
   review={\MR{0109212}},
 }

\bib{Ikeda}{article}{
   author={Ikeda, Akira},
   title={Riemannian manifolds $p$-isospectral but not $(p+1)$-isospectral},
   conference={
      title={Geometry of manifolds},
      address={Matsumoto},
      date={1988},
   },
   book={
      series={Perspect. Math.},
      volume={8},
      publisher={Academic Press, Boston, MA},
   },
   isbn={0-12-640170-5},
   date={1989},
   pages={383--417},
   review={\MR{1040537}},
}

\bib{Kammeyer-Spitler}{article}{
   author={Kammeyer, H.},
   author={Spitler, R.},
   title={Galois cohomology and profinitely solitary Chevalley groups},
   date={2023},
   journal={e-print},
   note={\arXiv{2301.11103}},
 }
 
 \bib{Klingen:similarities}{book}{
   author={Klingen, Norbert},
   title={Arithmetical similarities},
   series={Oxford Mathematical Monographs},
   note={Prime decomposition and finite group theory},
   publisher={The Clarendon Press, Oxford University Press, New York},
   date={1998},
   pages={x+275},
   isbn={0-19-853598-8},
   review={\MR{1638821}},
}

\bib{lmfdb}{webpage}{
  author={The LMFDB Collaboration},
  title={The {L}-functions and modular forms database},
  url={https://www.lmfdb.org},
  date={2023},
  accessdate={2023-08-30},
}

\bib{Pesce}{article}{
   author={Pesce, Hubert},
   title={Une r\'{e}ciproque g\'{e}n\'{e}rique du th\'{e}or\`eme de Sunada},
   language={French, with English summary},
   journal={Compositio Math.},
   volume={109},
   date={1997},
   number={3},
   pages={357--365},
   issn={0010-437X},
   review={\MR{1485923}},
}

\bib{Stark}{article}{
   author={Stark, Jim},
   title={On involutions with many fixed points in Gassmann triples},
   journal={Rose-Hulman Undergraduate Mathematics Journal},
   volume={9: Iss. 1},
   date={2008},
   number={8},
   eprint={https://scholar.rose-hulman.edu/rhumj/vol9/iss1/8},
}

\bib{Sunada}{article}{
   author={Sunada, Toshikazu},
   title={Riemannian coverings and isospectral manifolds},
   journal={Ann. of Math. (2)},
   volume={121},
   date={1985},
   number={1},
   pages={169--186},
   issn={0003-486X},
   review={\MR{0782558}},
}
 
\bib{Wielandt}{book}{
   author={Wielandt, Helmut},
   title={Finite permutation groups},
   note={Translated from the German by R. Bercov},
   publisher={Academic Press, New York-London},
   date={1964},
   pages={x+114},
   review={\MR{0183775}},
}
 
\end{biblist}
\end{bibdiv}

\end{document}